\newtheorem*{rep@theorem}{\rep@title}
\newcommand{\newreptheorem}[2]{%
\newenvironment{rep#1}[1]{%
 \def\rep@title{#2 \ref*{##1}}%
 \begin{rep@theorem}}%
 {\end{rep@theorem}}}
\newtheorem{thm}{Theorem}
\newtheorem{lem}[thm]{Lemma}
\newtheorem*{hyp*}{Hypotheses}
\newtheorem{rmk}[thm]{Remark}
\begin{document}

\title{On the interlacing property for zeros of Bessel functions}
\author{Dan J. Hill}
\affil{\small Mathematical Institute, University of Oxford, Oxford, OX2 6GG, UK}

\maketitle
\begin{abstract}

\noindent This note presents a simple approach to proving the interlacing properties of positive zeros of Bessel functions of the first kind. The approach relies only on the standard recurrence relations between Bessel functions and characterising intersections between curves, providing a more accessible and intuitive understanding for the interlacing behaviour of the zeros of Bessel functions.


\end{abstract}


\section{Introduction}

Bessel functions of the first kind, denoted by $J_{k}(r)$ for some $k\in\mathbb{R}$ and $r\geq0$, are a class of special functions defined as solutions of Bessel's differential equation
\begin{equation*}
    \left(\frac{\mathrm{d}^2}{\mathrm{d}r^2} + \frac{1}{r}\frac{\mathrm{d}}{\mathrm{d}r} + 1 - \frac{k^2}{r^2}\right)u(r) = 0.
\end{equation*}
which remain bounded as $r\to0$. Bessel functions arise as particular solutions of the Helmholtz equation, where the index $k$ corresponds to the angular dependence of the solution, and often appear in orthonormal bases and bifurcation problems in cylinders (such as in Erhardt, Wahl\'en \& Weber~\cite{Erhardt} and the recent work by Wheeler~\cite{Wheeler}). In these cases, it is often important to characterise the positive zeros $j_{k,n}$ of the Bessel functions $J_{k}(r)$ and the order in which they occur for increasing values of $r$. Here, the index $k$ of the zero $j_{k,n}$ coincides with the index of the Bessel function, while the index $n\in\mathbb{N}$ denotes an ordering (i.e. $0<j_{k,1} < j_{k,2} < \dots$).

There have been many mathematical studies regarding the zeros of Bessel functions, where we direct the reader to Chapter XV in Watson~\cite{Watson} for a historical overview (we also recommend reviews by Elbert~\cite{Elbert} and Laforgia \& Natalini~\cite{Laforgia} for more recent results). Early works by Gegenbauer~\cite{Gegenbauer} and Porter~\cite{Porter} in the 1890's provided simple proofs for the interlacing of zeros for neighbouring (in $k$) Bessel functions,
\begin{equation*}
    j_{k,n} < j_{k+1, n} < j_{k,n+1} < j_{k+1,n+1}, \qquad\qquad \forall k>-1, \quad n\in\mathbb{N},
\end{equation*}
using recurrence relations of the Bessel functions and simple tools from calculus. This simple interlacing structure becomes more complex as one considers the zeros of further apart (in $k$) Bessel functions, and the mathematical proofs likewise increase in complexity. 

There has been an increase of interest in studying the interlacing problem for Bessel functions (see for example Cho \& Chung~\cite{Cho}, Chung, Lee \& Park~\cite{Chung}, and P\'almai \& Apagyi~\cite{Palmai}); these approaches mostly use tools from ordinary differential equations, such as Sturm's comparison principle, or employ other concepts like Lommel polynomials and integral formulae for Bessel functions. We instead present a simple approach to prove the interlacing properties of the zeros $j_{k,n}$ that does not require any sophisticated analytic tools or additional concepts beyond basic properties of Bessel functions. In particular, our entire approach will rely on three simple identities for Bessel functions $J_k(r)$: the following two identities
\begin{equation}\label{id:Bessel-1}
    J_{k-1}(r) + J_{k+1}(r) = \frac{2k}{r}\,J_{k}(r),\qquad \qquad
    J_{k-1}(r) - J_{k+1}(r) = 2\, \frac{\mathrm{d}}{\mathrm{d}r}J_{k}(r)
\end{equation}
which hold for all $k\in\mathbb{R}$, $r>0$, as well as the Tur\'an inequality
\begin{equation}\label{id:Turan}
    J_{k+1}(r)\,J_{k-1}(r) - J_{k}(r)^2 > \frac{1}{k+1}\,J_k(r)^2
\end{equation}
which holds for $k>0$, $r>0$; a proof of \eqref{id:Turan} was first given by Sz\'asz~\cite{Szasz} in 1950. 

Considering the simplicity of this approach, along with the additional insight that it provides towards the interlacing zeros problem, it is remarkable that no one has previously presented these ideas. And yet, as far as this author is aware, this approach appears to be novel in this context. We hope that this work can help provide an accessible approach to characterise the zeros of Bessel functions, thereby demystifying a phenomenon that appears in many physical applications.

\section{Characterisation of zeros}
The key insight of this approach is to consider the function
\begin{equation}\label{def:Fk}
    F_{k}(r) := \frac{r\,J_{k}(r)}{J_{k+1}(r)}
\end{equation}
for $k>-1$ and $r>0$, for which it is straightforward to prove the following properties.
\begin{lem}\label{lem:F1}
For each fixed $k>-1$, the graph of $F_{k}(r)$ consists of countably many disjoint curves $\{\mathcal{B}_n\}_{n\in\mathbb{N}}$ formed as the graphs of functions $\phi_n(r): (j_{k+1,n-1}, j_{k+1,n})\to\mathbb{R}$ for $n\in\mathbb{N}$, respectively, where we denote $j_{k+1,0}=0$. Furthermore, each $\phi_n(r)$ is a smooth, strictly decreasing function on its domain, with $\phi_1(r) < 2\,(k+1)$ for all $r\in(0,j_{k+1,1})$ and
\begin{equation*}
 \phi_n(j_{k,n}) = 0, \qquad\qquad \phi_{n}(r) \to -\infty, \quad r \nearrow j_{k+1,n}, \qquad\qquad \phi_{n+1}(r) \to \infty, \quad r\searrow j_{k+1,n},
\end{equation*}
for all $n\in\mathbb{N}$.
\end{lem}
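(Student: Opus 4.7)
The plan is to unpack the definition of $F_k$ and let the three Bessel identities in the introduction do all of the work, in sequence. For the domain and the decomposition into curves: $J_{k+1}$ is a nontrivial solution of a second-order linear ODE, so its positive zeros $j_{k+1,n}$ are isolated and simple, and between consecutive zeros $J_{k+1}$ keeps constant sign. Setting $j_{k+1,0}:=0$, the domain of $F_k$ is the disjoint union of the open intervals $(j_{k+1,n-1},j_{k+1,n})$, and restricting $F_k$ to each such interval yields a smooth function $\phi_n$; this gives the disjoint curves $\mathcal{B}_n$.

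For the strict monotonicity, which I expect to be the main obstacle, I would compute $F_k'$ directly. The two identities in \eqref{id:Bessel-1} give (after a one-line manipulation) $J_k'(r)=\tfrac{k}{r}J_k(r)-J_{k+1}(r)$ and $J_{k+1}'(r)=J_k(r)-\tfrac{k+1}{r}J_{k+1}(r)$; substituting into the quotient rule produces
\begin{equation*}
F_k'(r) \;=\; \frac{2(k+1)\,J_k(r)\,J_{k+1}(r) - r\bigl[J_k(r)^2+J_{k+1}(r)^2\bigr]}{J_{k+1}(r)^2}.
\end{equation*}
The sign of the numerator is controlled by the Tur\'an inequality \eqref{id:Turan}, applied with the index shifted from $k$ to $k+1$ and combined with the consequence $J_{k+2}(r)=\tfrac{2(k+1)}{r}J_{k+1}(r)-J_k(r)$ of \eqref{id:Bessel-1}. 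A short rearrangement then yields
\begin{equation*}
r\bigl[J_k(r)^2+J_{k+1}(r)^2\bigr]-2(k+1)\,J_k(r)\,J_{k+1}(r) \;>\; \tfrac{r}{k+2}\,J_{k+1}(r)^2 \;>\; 0,
\end{equation*}
so $F_k'(r)<0$ on each interval, establishing strict decrease of every $\phi_n$.

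For the boundary behaviour, evaluating $J_{k+1}'(r)=J_k(r)-\tfrac{k+1}{r}J_{k+1}(r)$ at $r=j_{k+1,n}$ gives $J_{k+1}'(j_{k+1,n})=J_k(j_{k+1,n})$, which is nonzero since $j_{k+1,n}$ is a simple zero of $J_{k+1}$. Writing the Taylor expansions of $J_k$ and $J_{k+1}$ about $j_{k+1,n}$, this identity causes the leading coefficients to cancel and produces the local behaviour $F_k(r)\sim j_{k+1,n}/(r-j_{k+1,n})$, from which the one-sided limits $\phi_n(r)\to -\infty$ as $r\nearrow j_{k+1,n}$ and $\phi_{n+1}(r)\to +\infty$ as $r\searrow j_{k+1,n}$ follow at once. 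For the leftmost interval, the standard small-$r$ asymptotics $J_k(r)\sim (r/2)^k/\Gamma(k+1)$ give $\phi_1(r)\to 2(k+1)$ as $r\to 0^+$, and the strict decrease then yields $\phi_1(r)<2(k+1)$ throughout $(0,j_{k+1,1})$.

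Finally, for $\phi_n(j_{k,n})=0$: by monotonicity and the limits just established, each $\phi_n$ has a unique zero $z_n$ in its interval, and by the definition of $F_k$ this $z_n$ must be a positive zero of $J_k$. Conversely, since $J_k(j_{k+1,n})\neq 0$ (shown above), every positive zero of $J_k$ lies in some interval $(j_{k+1,n-1},j_{k+1,n})$; the strict ordering $z_1<z_2<\cdots$ then forces $z_n=j_{k,n}$, completing the proof.
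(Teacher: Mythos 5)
Your proof is correct and follows essentially the same route as the paper: compute $F_k'$ from the recurrences (your numerator $2(k+1)J_kJ_{k+1}-r\bigl[J_k^2+J_{k+1}^2\bigr]$ is exactly the paper's $r\bigl(J_kJ_{k+2}-J_{k+1}^2\bigr)$ after substituting $rJ_{k+2}=2(k+1)J_{k+1}-rJ_k$), bound it via the Tur\'an-type inequality to get strict decrease, and read off the asymptotes, the roots, and the limit $2(k+1)$ at the origin; your treatment of the simple-zero expansion and the identification $z_n=j_{k,n}$ just makes explicit what the paper leaves implicit. The one point worth flagging is that you use the inequality in the form $J_k^2-J_{k-1}J_{k+1}>\tfrac{1}{k+1}J_k^2$ (which is what Sz\'asz proved and what the argument needs), whereas \eqref{id:Turan} as printed has the opposite sign---a typo in the paper rather than a gap in your argument.
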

\begin{proof}
We first note, using \eqref{id:Bessel-1}, that 
\begin{equation*}
F_{k}'(r) = \frac{r \,(J_{k}(r)J_{k+2}(r) - J_{k+1}(r)^2)}{J^2_{k+1}(r)},
\end{equation*}
and so, by the Tur\'an inequality~\eqref{id:Turan},
\begin{equation*}
F_{k}'(r) < -\frac{r}{k+2}
\end{equation*}
for all $k>-1$. Hence, $F_{k}(r)$ is a strictly decreasing function at every point where it is differentiable. The graph of $F_{k}(r)$ has vertical asymptotes wherever $J_{k+1}(r) =0$ and roots when $J_{k}(r) = 0$; i.e., the vertical asymptotes and zeros of the graph of $F_{k}(r)$ lie at the points $r = j_{k+1,n}$ and $r=j_{k,n}$ for $n\in\mathbb{N}$. The bound $F_{k}(r) < 2(k+1)$ for $r\in(0,j_{k+1,1})$ follows from $F_k(r)$ being a strictly decreasing function, along with asymptotic expansions for $J_n$ at $r=0$.
\end{proof}

\begin{figure}[ht!]
    \centering
    \includegraphics[width=0.8\linewidth]{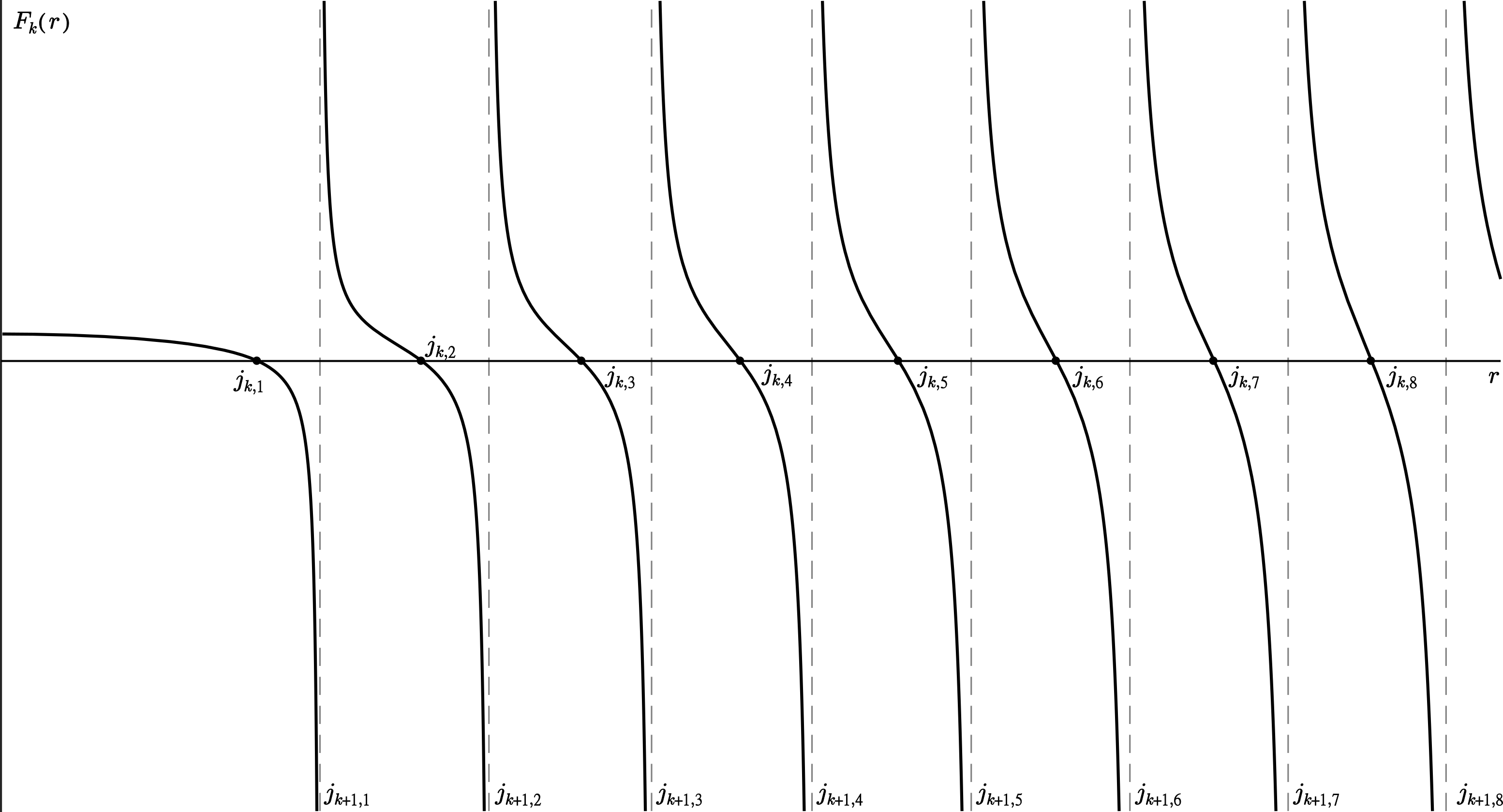}
    \caption{Schematic plot of $F_k(r)$ for any $k>-1$, as described in Lemma~\ref{lem:F1}}
    \label{fig:Bessel}
\end{figure}

An important consequence of Lemma~\ref{lem:F1} is that the qualitative shape of $F_k$ remains the same as $k$ varies, which allows us to consider a general $k>-1$ in our subsequent analysis; see Figure~\ref{fig:Bessel} for a visualisation of $F_k(r)$. We now express functions of the form $r^\ell\,J_{k+1+\ell}(r)/ J_{k+1}(r)$ in terms of $F_k(r)$, which we present in the following lemma.
\begin{lem}\label{lem:recurr}
    For each $k>-1$, the function $a_\ell := r^\ell\,J_{k+1+\ell}(r)/ J_{k+1}(r)$ is a solution to the recurrence relation
\begin{equation}\label{e:recurr}
a_{\ell+2} =2(k+2+\ell)a_{\ell+1} - r^2\,a_\ell, \qquad\qquad a_0 = 1, \qquad a_1 = 2(k+1) - y
\end{equation}
    for all $\ell\in\mathbb{N}_0$, $r>0$, where we have defined $y = F_k(r)$.
\end{lem}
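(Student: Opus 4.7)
The plan is to obtain both the initial conditions and the recurrence \eqref{e:recurr} by a single tool, namely the first identity in \eqref{id:Bessel-1} applied with two different choices of the index, and then to clear denominators by multiplying through by suitable powers of $r$ divided by $J_{k+1}(r)$.

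First I would check $a_0 = 1$, which is immediate from the definition of $a_\ell$. To compute $a_1$, I would apply the three-term relation $J_{m-1}(r) + J_{m+1}(r) = (2m/r)\,J_{m}(r)$ with $m = k+1$, obtaining
\begin{equation*}
  J_{k+2}(r) \;=\; \frac{2(k+1)}{r}\,J_{k+1}(r) \;-\; J_{k}(r).
\end{equation*}
Multiplying by $r/J_{k+1}(r)$ and recalling the definition $y = F_k(r) = rJ_k(r)/J_{k+1}(r)$ from \eqref{def:Fk} yields $a_1 = 2(k+1) - y$, as required.

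Next I would prove the recurrence by applying the same identity with $m = k+2+\ell$, which gives
\begin{equation*}
  J_{k+1+\ell}(r) \;+\; J_{k+3+\ell}(r) \;=\; \frac{2(k+2+\ell)}{r}\,J_{k+2+\ell}(r).
\end{equation*}
Multiplying through by $r^{\ell+2}/J_{k+1}(r)$, the three terms collect neatly as $r^{2} a_\ell$, $a_{\ell+2}$, and $2(k+2+\ell)\,a_{\ell+1}$, after which rearrangement gives exactly \eqref{e:recurr}. Since this holds for every $\ell\in\mathbb{N}_0$, together with the two initial values, the lemma follows.

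Because the whole argument reduces to a single application of the first identity in \eqref{id:Bessel-1}, there is no genuine obstacle; the only point requiring a little care is bookkeeping the power of $r$ so that the $r^\ell$ factors in the definition of $a_\ell$ absorb the $1/r$ on the right-hand side of the three-term relation and produce the clean form of \eqref{e:recurr}. It is also worth noting that $J_{k+1}(r) \neq 0$ on the domains considered in Lemma~\ref{lem:F1}, so dividing by $J_{k+1}(r)$ is legitimate wherever the recurrence is stated.
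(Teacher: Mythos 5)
Your proposal is correct and follows essentially the same route as the paper: both derive the initial value $a_1$ and the recurrence from the three-term identity in \eqref{id:Bessel-1} with a shifted index, then clear denominators by multiplying by the appropriate power of $r$ over $J_{k+1}(r)$. The paper merely packages the shifted identity as a statement about $F_{k+1+\ell}$ before multiplying through, which is the same computation.
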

\begin{proof}
We recall that the Bessel function $J_k$ satisfies the recurrence relation \eqref{id:Bessel-1}, and so
\begin{equation*}
\begin{split}
F_k(r) ={}& 2(k+1) - \frac{r\,J_{k+2}(r)}{J_{k+1}(r)}.\\
\end{split}
\end{equation*}
Sending $k\mapsto k+1+\ell$ and then multiplying both sides by $r^{\ell+1}\,J_{k+2+\ell}(r)/J_{k+1}(r)$, we obtain
\begin{equation*}
\begin{split}
r^2\,\frac{r^\ell\,J_{k+1+\ell}(r)}{J_{k+1}(r)} ={}& 2(k+2+\ell)\frac{r^{\ell+1}\,J_{k+2+\ell}(r)}{J_{k+1}(r)} - \frac{r^{2+\ell}\,J_{k+3+\ell}(r)}{J_{k+1}(r)},
\end{split}
\end{equation*}
for any $\ell\in\mathbb{N}_0$. Recalling the definitions of $a_\ell$ and $y$, we note that these two equations correspond to 
\begin{equation*}
    a_1 = 2(k+1) - y, \qquad\qquad r^2 a_{\ell} = 2(k+2+\ell) a_{\ell+1} - a_{\ell+2}
\end{equation*}
with $a_0 = 1$ by definition, and thus $r^{\ell}\,J_{k+1+\ell}(r)/J_{k+1}(r)$ satisfies \eqref{e:recurr}.
\end{proof}

By deriving an implicit relationship between $F_k(r)$ and functions of the form $r^{n+1}\,J_{k+2+n}(r)/J_{k+1}(r)$ (which can be made explicit through iterative calculations), we are now able to associate the zeros $j_{k+\ell,n}$ with intersections between the graph of $F_k$ and other curves, which we present in the following theorem.

\begin{thm}\label{thm:intersect}
    Fix $k>-1$, $\ell\in\mathbb{N}$ and let $a_\ell = a_\ell(y,r)$ be a solution of \eqref{e:recurr}. The zeros $j_{k+1+\ell, n}$ correspond to intersections between the graphs of $F_k$ and $G_{k,1+\ell}$, where $G_{k,1+\ell}(r)$ is found by solving $a_{\ell}(G_{k,1+\ell}, r)=0$. In particular,
    \begin{equation*}
\begin{split}
G_{k,2}(r) ={}& 2(k+1)\\
G_{k,3}(r) ={}& 2(k+1) - \frac{r^2}{2(k+2)}\\
G_{k,4}(r) ={}& 2(k+1) - \frac{2(k+3)\,r^2}{\left(4(k+2)(k+3) - r^2\right)}\\
G_{k,5}(r) ={}& 2(k+1) - \frac{\left(4(k+3)(k+4) - r^2\right)r^2}{4(k+3)\left(2(k+2)(k+4) - r^2\right)}\\
G_{k,6}(r) ={}& 2(k+1)  - \frac{4(k+4)\left(2(k+3)(k+5) - r^2\right)r^2}{\left(16\,(k+5)(k+4)(k+3)(k+2) - 12\,(k+3)(k+4)r^2 + r^4\right)}\\
G_{k,7}(r) ={}& 2(k+1) - \frac{\left(16(k+6)(k+5)(k+4)(k+3) - 12(k+5)(k+4)r^2 + r^4\right)r^2}{2(k+4)\left(16(k+6)(k+5)(k+3)(k+2) - 16(k+5)(k+3)r^2 + 3 r^4\right)}\\
\end{split}
\end{equation*}
\end{thm}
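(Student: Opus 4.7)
The plan has two parts: first, establish the general correspondence between the zeros $j_{k+1+\ell,n}$ and intersections of graphs; second, obtain the explicit formulae by iterating the recurrence \eqref{e:recurr}.

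For the first part, I would observe that the recurrence is affine in $y$: since $a_0 = 1$ and $a_1 = 2(k+1) - y$, a simple induction using $a_{\ell+2} = 2(k+2+\ell)\,a_{\ell+1} - r^2\,a_\ell$ shows that $a_\ell(y,r) = \alpha_\ell(r) - \beta_\ell(r)\,y$ for some polynomials $\alpha_\ell(r), \beta_\ell(r)$ (with $k$-dependent coefficients), each itself satisfying \eqref{e:recurr} with initial data $\alpha_0 = 1, \alpha_1 = 2(k+1)$ and $\beta_0 = 0, \beta_1 = 1$. Consequently, whenever $\beta_\ell(r) \neq 0$, the equation $a_\ell(y,r) = 0$ has the unique solution $y = \alpha_\ell(r)/\beta_\ell(r)$, which we take as the definition of $G_{k,1+\ell}(r)$.

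To connect this to the zeros of Bessel functions, I would invoke Lemma \ref{lem:recurr} to obtain the identity
\begin{equation*}
r^\ell\, J_{k+1+\ell}(r) \;=\; \alpha_\ell(r)\, J_{k+1}(r) - \beta_\ell(r)\, r\, J_k(r), \qquad r > 0,
\end{equation*}
which follows by multiplying $a_\ell(F_k(r), r) = r^\ell\, J_{k+1+\ell}(r)/J_{k+1}(r)$ through by $J_{k+1}(r)$. At $r = j_{k+1+\ell,n}$ the left-hand side vanishes, and since $F_k$ is defined precisely where $J_{k+1}(r) \neq 0$, we may divide through to get $F_k(j_{k+1+\ell,n}) = G_{k,1+\ell}(j_{k+1+\ell,n})$. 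Conversely, any intersection $F_k(r) = G_{k,1+\ell}(r)$ (a point where both are finite) forces $a_\ell(F_k(r), r) = 0$, and hence $J_{k+1+\ell}(r) = 0$.

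The second part is the computational step: I would evaluate $\alpha_\ell$ and $\beta_\ell$ by iterating the recurrence from the prescribed initial data, then rearrange $\alpha_\ell(r)/\beta_\ell(r)$ into the form $2(k+1) - N_\ell(r)/D_\ell(r)$ displayed in the theorem by peeling off the leading $2(k+1)\,\beta_\ell(r)$ from $\alpha_\ell(r)$ in the numerator. The main obstacle I anticipate is purely computational, namely carrying out the six iterations cleanly to reach $G_{k,7}$ without arithmetic slips; no further analytic difficulty arises, since both the inductive proof of affine linearity and the verification at each zero are immediate from Lemma \ref{lem:recurr}.
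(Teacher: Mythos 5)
Your proposal is correct and follows essentially the same route as the paper: apply Lemma \ref{lem:recurr} to identify $a_\ell(F_k(r),r)$ with $r^\ell J_{k+1+\ell}(r)/J_{k+1}(r)$, evaluate at the zeros for one direction and read off $J_{k+1+\ell}(r_*)=0$ at intersection points for the converse, then iterate the recurrence to obtain the explicit formulae. Your added observation that $a_\ell$ is affine in $y$, so that $G_{k,1+\ell}=\alpha_\ell/\beta_\ell$ is uniquely determined wherever $\beta_\ell\neq 0$, is a small but genuine clarification of a point the paper leaves implicit.
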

\begin{proof}
By Lemma~\ref{lem:recurr}, we obtain
\begin{equation*}
    \frac{r^\ell J_{k+1+\ell}(r)}{J_{k+1}(r)} = a_{\ell}(F_k(r),r),
\end{equation*}
and so, evaluating at $r = j_{k+1+\ell,n}$, it follows that 
\begin{equation*}
    a_{\ell}(F_k(j_{k+1+\ell,n}),j_{k+1+\ell,n}) = 0.
\end{equation*}
By the definition of $G_{k,1+\ell}(r)$, it follows that $F_k(j_{k+1+\ell,n}) = G_{k,1+\ell}(j_{k+1+\ell,n})$. Likewise, $F_k(r_*) = G_{k,1+\ell}(r_*)$ for some $r_*>0$ implies that $a_{\ell}(F_k(r_*),r_*) = \frac{r_*^\ell J_{k+1+\ell}(r_*)}{J_{k+1}(r_*)} =  0$, and so $r_* = j_{k+1+\ell,m}$ for some $m\in\mathbb{N}$. We solve \eqref{e:recurr} iteratively for low orders of $n$, where we obtain
\begin{equation*}
\begin{split}
a_1 ={}& \left[2(k+1) - y\right],\\
a_2 ={}& 2(k+2)\left[2(k+1) - y\right] - r^2,\\
a_3 ={}& \left(4(k+2)(k+3) - r^2\right)\left[2(k+1) - y\right] - 2(k+3)\,r^2,\\
a_4 ={}& 4(k+3)\left(2(k+2)(k+4) - r^2\right)\left[2(k+1) - y\right] - \left(4(k+3)(k+4) - r^2\right)r^2,\\
a_5 ={}& \left(16\,(k+5)(k+4)(k+3)(k+2) - 12\,(k+3)(k+4)r^2 + r^4\right)\left[2(k+1) - y\right] \\
&\qquad - 4(k+4)\left(2(k+3)(k+5) - r^2\right)r^2,\\
a_6 ={}& 2(k+4)\left(16(k+6)(k+5)(k+3)(k+2) - 16(k+5)(k+3)r^2 + 3 r^4\right)\left[2(k+1) - y\right] \\
&\qquad - \left(16(k+6)(k+5)(k+4)(k+3) - 12(k+5)(k+4)r^2 + r^4\right)r^2.\\
\end{split}
\end{equation*}    
Setting $a_{\ell}=0$ and substituting $y = G_{k,1+\ell}(r)$ yields the expressions for $G_{k,2}, \dots, G_{k,7}$.
\end{proof}

\begin{rmk}
    We note that, if $a_\ell, a_{\ell+1}$ have the form
\begin{equation*}
    a_{j} = P_{j}(r^2)\,\left[2(k+1) - y\right] - Q_{j}(r^2)\,r^2,
\end{equation*}
    where $P_{j}, Q_{j}$ are polynomials of degree $n_j,m_j$ for each $j=\ell,\ell+1$, respectively, then it follows that
\begin{equation*}
    a_{\ell+2} = P_{\ell+2}(r^2)\,\left[2(k+1) - y\right] - Q_{\ell+2}(r^2)\,r^2,
\end{equation*}
    where $P_{\ell+2}, Q_{\ell+2}$ are polynomials of respective degree $\max\{n_\ell+1, n_{\ell+1}\}$ and $\max\{m_\ell+1,m_{\ell+1}\}$. It follows from the initial forms of $a_1$ and $a_2$ that $a_{\ell}$ can be written as
\begin{equation*}
    a_{\ell} = P_{\ell}(r^2)\,\left[2(k+1) - y\right] - Q_{\ell}(r^2)\,r^2,
\end{equation*}
for all $\ell\in\mathbb{N}\backslash\{1\}$, where $P_{\ell}, Q_{\ell}$ are polynomials of degree $\lfloor\frac{\ell-1}{2}\rfloor$ and $\lfloor\frac{\ell-2}{2}\rfloor$, respectively. Consequently, the graph of $G_{k,\ell+1}$, given by the explicit form
\begin{equation*}
    G_{k,\ell+1}(r) = 2(k+1) - \frac{Q_{\ell}(r^2)\,r^2}{P_{\ell}(r^2)},
\end{equation*}
will have at most $\lfloor\frac{\ell-1}{2}\rfloor$ asymptotes, $|G_{k,\ell+1}(r)|\to\infty$ as $r\to\infty$ if $\ell$ is even, and $|G_{k,\ell+1}(r)|\to C_{k,\ell}$ with $C_{k,\ell}$ constant if $\ell$ is odd.
\end{rmk}

It remains to characterise the ordering of each zero $j_{k+\ell,n}$ based on the intersections of $F_k$ with $G_{k,\ell}$. We note that, since $F_k$ is a strictly decreasing function over each region $(j_{k+1,n}, j_{k+1,n+1})$, it is sufficient to show that $G_{k,\ell_1}(r) > G_{k,\ell_2}(r)$ for all $r\in(j_{k+1,n}, j_{k+1,n+1})$ in order to conclude that any zeros of $J_{k+\ell_1}(r)$ in $(j_{k+1,n}, j_{k+1,n+1})$ must occur before any zeros of $J_{k+\ell_2}(r)$.

In the following theorem, we consider the interlacing properties of zeros of $\{J_{k}, J_{k+1}, J_{k+2}, J_{k+3}, J_{k+4}\}$; one can of course extend these results further, but the conditions for different interlacing properties become more complex.
\begin{thm}\label{thm:interlacing}
    Fix $k>-1$, $n\in\mathbb{N}$ and define $r_k:=2\sqrt{(k+1)(k+2)}$, $\hat{r}_k:=\sqrt{2(k+1)(k+3)}$; the zeros $j_{k+2,n}, j_{k+3,n}$ lie in the interval $(j_{k+1,n}, j_{k+1,n+1})$ and satisfy the following interlacing properties:
    \begin{equation*}
        \begin{cases}
            j_{k+1,n} < j_{k+2,n} < j_{k+3,n} < j_{k,n+1} < j_{k+1,n+1}, & \quad \text{if $j_{k,n+1} < r_k$,}\\
            j_{k+1,n} < j_{k+2,n} < j_{k,n+1} < j_{k+3,n} <  j_{k+1,n+1}, & \quad \text{if $j_{k,n+1} > r_k$.} \\
        \end{cases}
    \end{equation*}
    Furthermore, the zero $j_{k+4,n}$ lies in the interval
    \begin{equation*}
        j_{k+4,n}\in\begin{cases}
            (j_{k+3,n}, j_{k,n+1}), &  \quad \text{if $j_{k+1,n+1}<r_{k+1}$ and $j_{k,n+1} < \hat{r}_{k}$,}\\
         (j_{k,n+1}, j_{k+1, n+1}), & \quad \text{if $j_{k+1,n+1}<r_{k+1}$ and $j_{k,n+1} \in(\hat{r}_{k}, r_{k})$,}\\
        (j_{k+3,n}, j_{k+1, n+1}), &  \quad \text{if $j_{k+1,n+1}<r_{k+1}$ and $j_{k,n+1} > r_{k}$,}\\
        (j_{k+1,n+1}, j_{k+2,n+1}), &  \quad \text{if $j_{k+1,n+1}>r_{k+1}$.}\\
        \end{cases}
    \end{equation*}
\end{thm}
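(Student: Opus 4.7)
My plan is to leverage Lemma~\ref{lem:F1}: since $F_k$ is strictly decreasing from $+\infty$ to $-\infty$ on each interval $(j_{k+1,n}, j_{k+1,n+1})$, the ordering of zeros lying in this interval is determined entirely by the corresponding $F_k$-values, which by Theorem~\ref{thm:intersect} are $G_{k,2}=2(k+1)$ at $j_{k+2,n}$, $G_{k,3}(j_{k+3,n})$ at $j_{k+3,n}$, $G_{k,4}(j_{k+4,n})$ at $j_{k+4,n}$, and $0$ at $j_{k,n+1}$. The key computational inputs are the threshold identities $G_{k,3}(r_k)=0$ and $G_{k,4}(\hat{r}_k)=0$, the rational identity
\begin{equation*}
G_{k,3}(r) - G_{k,4}(r) = \frac{r^4}{2(k+2)(r_{k+1}^2 - r^2)},
\end{equation*}
which flips sign at $r_{k+1}$, and the observation that $G_{k,4}(r)>2(k+1)=G_{k,2}(r)$ for $r>r_{k+1}$.

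For the first part of the theorem, $G_{k,2}>G_{k,3}$ immediately yields $j_{k+2,n}<j_{k+3,n}$, and comparing $G_{k,3}(j_{k,n+1})$ with $0 = F_k(j_{k,n+1})$ via the threshold $r_k$ places $j_{k+3,n}$ on the correct side of $j_{k,n+1}$; the outermost inclusions $j_{k+1,n}<j_{k+2,n}$ and $j_{k+3,n}<j_{k+1,n+1}$ are automatic from the endpoint behaviour of $F_k$. For Part~(ii), Case~1 ($j_{k+1,n+1}<r_{k+1}$) is direct: the whole interval lies in $(0,r_{k+1})$, so $G_{k,3}>G_{k,4}$ throughout, giving $F_k(j_{k+3,n})=G_{k,3}(j_{k+3,n})>G_{k,4}(j_{k+3,n})$ and hence $j_{k+3,n}<j_{k+4,n}<j_{k+1,n+1}$; the three sub-cases follow by comparing $G_{k,4}(j_{k,n+1})$ with $0$ using $\hat{r}_k$ together with the case split from Part~(i).

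The main obstacle is Case~2 ($j_{k+1,n+1}>r_{k+1}$), where the vertical asymptote of $G_{k,4}$ at $r_{k+1}$ falls inside (or just before) the interval $(j_{k+1,n}, j_{k+1,n+1})$, breaking the naive monotone-intersection argument. The upper bound $j_{k+4,n}<j_{k+2,n+1}$ is easy once we are in the next interval: on $(j_{k+1,n+1}, j_{k+1,n+2})\subset(r_{k+1},\infty)$ one has $G_{k,4}>G_{k,2}$, so $F_k$ meets $G_{k,4}$ before $G_{k,2}$. The hard part is showing that $j_{k+4,n}$ has actually moved into this next interval. To establish this, I will iterate the recurrence~\eqref{id:Bessel-1} at a zero $j_{k+1,m}$ of $J_{k+1}$ to derive
\begin{equation*}
J_{k+4}(j_{k+1,m}) = J_k(j_{k+1,m})\left(1 - \frac{r_{k+1}^2}{j_{k+1,m}^2}\right),
\end{equation*}
whose bracketed factor changes sign exactly at $r_{k+1}$. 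Combined with the sign alternation of $J_k$ at consecutive zeros of $J_{k+1}$ (a direct consequence of Lemma~\ref{lem:F1}), a sign-change count shows that the number of zeros of $J_{k+4}$ in $(0, j_{k+1,n+1})$ equals $n$ in Case~1 but only $n-1$ in Case~2, precisely the shift that forces $j_{k+4,n}$ into $(j_{k+1,n+1}, j_{k+2,n+1})$.
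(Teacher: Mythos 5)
Your proposal follows the paper's strategy almost verbatim for the first part and for the case $j_{k+1,n+1}<r_{k+1}$: both arguments reduce the ordering of the zeros to the ordering of $G_{k,2},G_{k,3},G_{k,4}$ against the strictly decreasing $F_k$, keyed to the thresholds $r_k,\hat{r}_k$ (the roots of $G_{k,3},G_{k,4}$) and $r_{k+1}$ (the pole of $G_{k,4}$); your identity for $G_{k,3}-G_{k,4}$ and the inequality $G_{k,4}>G_{k,2}$ on $(r_{k+1},\infty)$ are exactly the comparisons the paper tabulates in its four-regime ordering. Where you genuinely diverge is the regime $j_{k+1,n+1}>r_{k+1}$. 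The paper handles it by asserting that $G_{k,4}$ meets $F_k$ exactly once in every interval $(j_{k+1,m},j_{k+1,m+1})$ except the one containing its asymptote, where it meets it not at all, so the index of $j_{k+4,\cdot}$ shifts by one; you instead derive the identity $J_{k+4}(j_{k+1,m})=J_k(j_{k+1,m})\bigl(1-r_{k+1}^2/j_{k+1,m}^2\bigr)$ (which is correct --- it follows by iterating \eqref{id:Bessel-1} three times at a zero of $J_{k+1}$) and count sign changes of $J_{k+4}$ at the points $j_{k+1,m}$. Your route makes the index shift at $r_{k+1}$ more transparent and supplies an honest lower bound on the number of zeros of $J_{k+4}$ below $j_{k+1,n+1}$, something the paper's ``no intersection'' assertion does not argue in detail. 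Be aware, though, that to upgrade ``at least $n$ (resp.\ $n-1$) zeros'' to ``exactly'', and likewise to place $j_{k+2,n},j_{k+3,n},j_{k+4,n}$ in the $n$-th interval at all, you still need that each interval $(j_{k+1,m},j_{k+1,m+1})$ contains at most one zero of the relevant Bessel function and that none lie in $(0,j_{k+1,1})$ --- the same counting step the paper dispatches with its ``exactly one intersection'' claim and the monotonicity of $j_{k,1}$ in $k$ --- so the two arguments land at a comparable level of rigour, with yours arguably sharper on the one genuinely delicate case.
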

\begin{proof}
    The functions $G_{k,2}, G_{k,3}$ are continuous and do not intersect $F_k$ in the region $(0,j_{k+1,1})$ (by monotonicity in $k$ of $j_{k,1}$); hence, each function intersects $F_{k}$ exactly once in each region $(j_{k+1,n}, j_{k+1,n+1})$ for $n\in\mathbb{N}$. We note that the points $r=r_k, \hat{r}_k$ are the roots of $G_{k,3}, G_{k,4}$, respectively, and $r=r_{k+1}$ is the point of discontinuity of $G_{k,4}$. Hence, we have the following ordering
    \begin{equation*}
    \begin{cases}
        G_{k,2}(r) > G_{k,3}(r) > G_{k,4}(r) > 0, & r\in(0, \hat{r}_k),\\
        G_{k,2}(r) > G_{k,3}(r) > 0 > G_{k,4}(r), & r\in(\hat{r}_k, r_k),\\
        G_{k,2}(r) > 0 > G_{k,3}(r) > G_{k,4}(r), & r\in(r_k, r_{k+1}),\\
        G_{k,4}(r) > G_{k,2}(r) > 0 > G_{k,3}(r), & r > r_{k+1},\\
    \end{cases}
\end{equation*} 
    which directly yield the interlacing properties for the zeros of $\{J_k,J_{k+1}, J_{k+2}, J_{k+3}\}$, noting that $j_{k,n+1}<r_k$ is the condition for $G_{k,3}>0$ at the intersection point in the interval $(j_{k+1,n},j_{k+1,n+1})$.
    
    The function $G_{k,4}$ has a single discontinuity at $r = r_{k+1}$, with $G_{k,4}(r) < G_{k,3}(r)$ for all $r<r_{k+1}$ and $G_{k,4}(r) > G_{k,2}(r)$ for all $r>r_{k+1}$. If the discontinuity point $r= r_{k+1}$ lies between $(j_{k+1,n}, j_{k+1,n+1})$, then there is no intersection between $G_{k,4}$ and $F_k$, and otherwise there is exactly one intersection. The zeros satisfy the interlacing properties given by the inequalities for $G_{k,2}, G_{k,3}, G_{k,4}$, again using the fact that $F_k$ is a strictly decreasing function.
\end{proof}

\begin{figure}[ht!]
    \centering
    \includegraphics[width=0.8\linewidth]{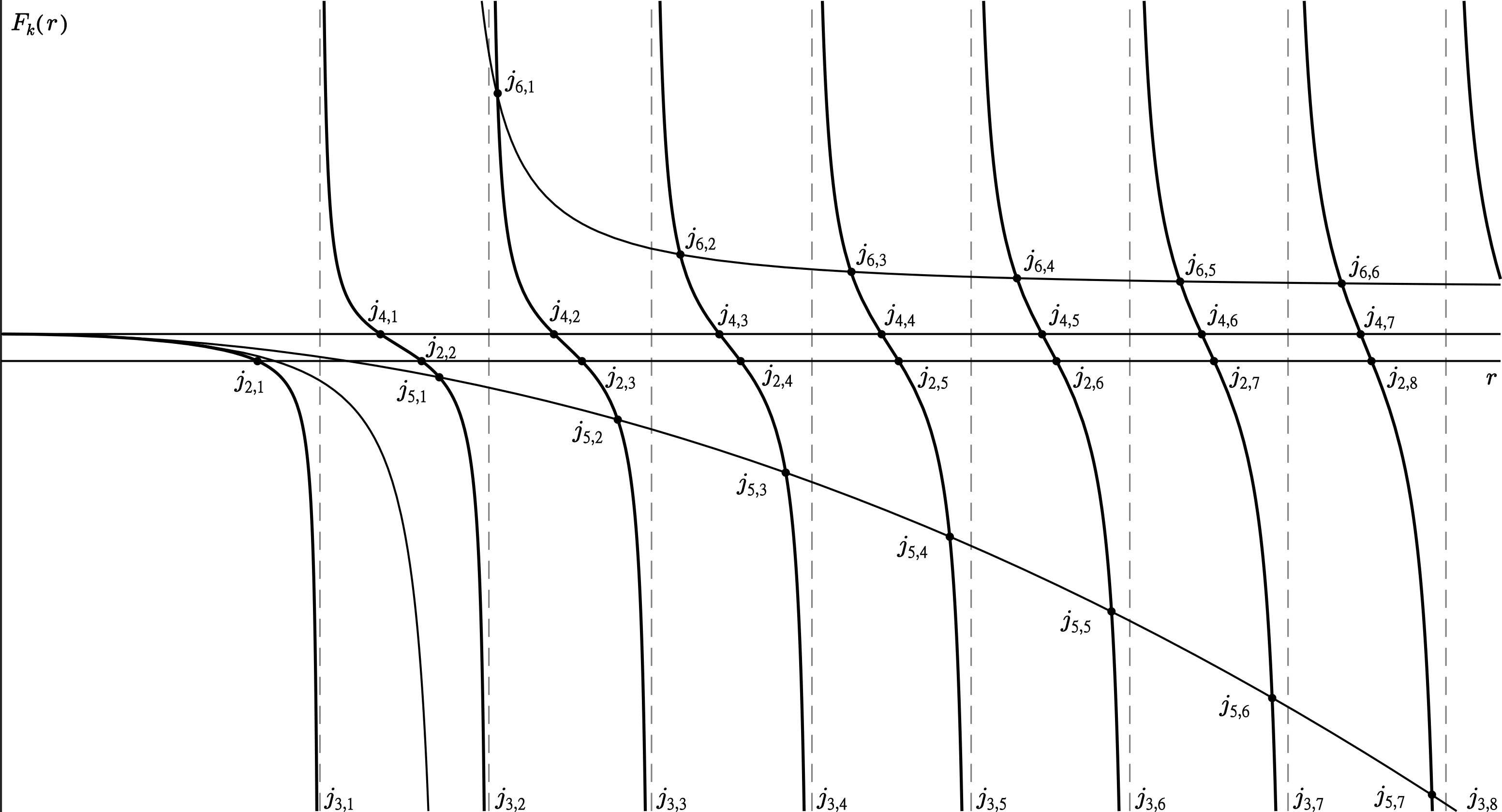}
    \caption{Numerical plot of the intersections between $F_k$ and $G_{k,2}, G_{k,3}, G_{k,4}$ for $k=2$, as well as the corresponding ordering for the zeros $j_{2,n}$, $j_{3,n}$, $j_{4,n}$, $j_{5,n}$ and $j_{6,n}$ for $n=1,\dots,8$.}
    \label{fig:m3}
\end{figure}

We present a specific example of Theorem~\ref{thm:interlacing} for $k=2$, see Figure~\ref{fig:m3}. In this case $r_{2} = 4\sqrt{3}$, $\hat{r}_2 = \sqrt{30}$, and $r_{3} = 4\sqrt{5}$, and so $r_{2},\hat{r}_2 < j_{2,2}$ and $r_{3}\in(j_{3,1}, j_{3,2})$. The first zero $j_{6,1}$ thus lies in the region $(j_{3,2}, j_{3,4})$, resulting in the interlacing property
\begin{equation*}
    j_{2,1} < j_{3,1} < j_{4,1} < j_{2,2}<j_{5,1}< \dots < j_{3,n+1} < j_{6,n} < j_{4,n+1} < j_{2,n+2} < j_{5,n+1} < j_{3,n+2} < \dots
\end{equation*}
for each $n\in\mathbb{N}$.

The approach presented herein provides a simple and intuitive way to prove the interlacing properties of zeros of Bessel functions. This approach is not only accessible---requiring only the identities \eqref{id:Bessel-1}, \eqref{id:Turan} and the analysis of curves of ratios of polynomials---but it also highlights additional intuition for how the interlacing between $j_{k,n}$ and $j_{k+\ell,n}$ varies for higher values of $\ell\in\mathbb{N}$. Via this method, one can calculate the $G_{k,\ell}$ whose intersection with $F_k$ generates $j_{k+\ell,n}$ and identify the discontinuities in $G_{k,\ell}$ that result in changes to interlacing of $j_{k+\ell,n}$ with $j_{k,n}$. We note that the approach relies heavily on the generic shape of the graph of $F_k$, which arises as a consequence of Sturm's oscillation theorem for Bessel functions. This suggests that a similar approach may be useful in characterising the zeros of other special functions, particularly those arising from Sturm--Liouville theory.

\bibliographystyle{abbrv}
\bibliography{Bibliography.bib}

\end{document}